\numberwithin{theorem}{section}
\newcommand{\TheTitle}{Stochastic Control and Differential Games with Path-Dependent Influence of
Controls on Dynamics and Running Cost} 
\newcommand{\TheAuthors}{Y. F. Saporito}
\title{{\TheTitle}}
\author{
  Yuri F. Saporito\thanks{Escola de Matem\'atica Aplicada (EMAp), Funda\c{c}\~ao Get\'ulio Vargas (FGV), Rio de Janeiro, Brazil
    (\email{yuri.saporito@fgv.br}, \url{http://www.yurisaporito.com}).}
}
\newcommand{\ds}{\displaystyle} 
\newcommand{\eps}{\varepsilon} 
\newcommand{\al}{\alpha}
\newcommand{\bE}{\mathbb{E}}
\newcommand{\cF}{\mathcal{F}}
\newcommand{\cA}{\mathcal{A}}
\newcommand{\bP}{\mathbb{P}}
\newcommand{\bA}{\mathbb{A}}
\newcommand{\bC}{\mathbb{C}}
\newcommand{\bN}{\mathbb{N}}
\newcommand{\bR}{\mathbb{R}}
\newcommand{\bS}{\mathbb{S}}
\def\hyph{-\penalty0\hskip0pt\relax}
\begin{document}

\maketitle

\begin{abstract}
In this paper we consider the functional It\^o calculus framework to find a path\hyph dependent version of the Hamilton\hyph Jacobi\hyph Bellman equation for stochastic control problems that feature dynamics and running cost that \textit{depend on the path of the control}. We also prove a Dynamic Programming Principle for such problems. We apply our results to path\hyph dependence of the delay type. We further study Stochastic Differential Games in this context.
\end{abstract}

\begin{keywords}
  Functional It\^o Calculus, Path-dependence, Stochastic Control, Stochastic Games, Delay.
\end{keywords}

\begin{AMS}
  	49L99, 91A15, 60H30
\end{AMS}

\section{Introduction}
Stochastic control problems and differential games appear naturally in various areas of applications. Portfolio allocation, investment\hyph consumption utility maximization, hedging in incomplete markets and real options are some important examples in Finance and Economics. See, for instance, \cite{pham_book} and \cite{carmona_control}. The standard case deals with a controlled diffusion
\begin{align*}
\left\{\begin{array}{l}
dx_s^{t, y, \al} = b(s, x_s, \alpha_s) ds + \sigma(s, x_s, \alpha_s)dw_s, \mbox{ if } s > t,\\ \\
x_t^{t, y, \al} = y,
\end{array}\right.
\end{align*}
and a cost functional
\begin{align*}
J(t, y, \al) = \bE\left[g(x_T^{t, y, \al}) + \int_t^T f(s, x_s^{t, y, \al}, \alpha_s) ds \right],
\end{align*}
where $\al$ is a admissible control and $g$ and $f$ are suitable functions. The quantity of interest here is the value function:
\begin{align*}
V(t, y) = \inf_{\al \in \bA } J(t, y, \al),
\end{align*}
where $\bA$ is the set of admissible controls. 

Two very important results on Stochastic Control are the Dynamic Programming Principle (DPP) and the Verification Theorem for the related Hamilton\hyph Jacobi\hyph Bellman (HJB) equation. The main contribution of our paper is to extend the DPP and the HJB to stochastic control problems that feature dynamics and running cost that depend on the path of the control $\alpha$. The main example to have in mind is the following delayed-control diffusion
\begin{align}
dx_s^{t, y, \alpha} = (\alpha_s - \alpha_{s - \tau})dt + \sigma dw_s, \quad x_t^{t,y,\alpha} =y, \label{eq:simple_example}
\end{align}
for a non-random, fixed $\tau > 0$. In this case, we say the control problem exhibits \textit{path\hyph dependence in the control}. This is not to be confused with the Closed-Loop Perfect State (CLPS) controls, which means that the control $\alpha$ is progressively\hyph measurable with respect to the filtration generated by the process $x$.

Stochastic control has been already extended to consider path\hyph dependence of dynamics and running cost with respect to the state variable $x$, see, for example, \cite{fournie_cont_thesis}, \cite{fito_hjb} and \cite{fito_hjb_ji}. We say the control problem in this case exihbits \textit{path\hyph depndence in the state variable}. Functional It\^o calculus was also applied to the stochastic control problem of portfolio optimization with bounded memory in \cite{fito_stochastic_portfolio_delay}. Furthermore, the theory was applied to zero\hyph sum stochastic differential games in \cite{fito_zhang_9_game}. However, these references do not deal with path\hyph dependent influence of the control, only path\hyph dependence in the state of the system. This generalization is fundamentally different from the one pursued in our paper, which will become clear in the sections to follow, see Remark \ref{rmk:x_control}.

General path\hyph dependent effect of the control in the dynamics and the running cost are still incipient in theory and applications of stochastic control and differential games. This is very likely related to the lack of theoretical tools to deal with such objects in an appropriate way. We hope this work will provide a useful framework. 

For example, in \cite{gozzi_delay_1} and \cite{ gozzi_delay_2_I, gozzi_delay_2_II}, the authors considered a class of problems that exhibit a particular type of path\hyph dependence in the control, namely delayed controls. The method implemented there is a classical infinite\hyph dimensional analysis and they derived an infinite\hyph dimensional HJB equation. However, their method is strongly related to the delay\hyph type of path\hyph dependence. Additionally, we forward the reader to the following articles \cite{delay_control_huang, delay_control_li, delay_control_alekal}. These results were recently applied to stochastic games in \cite{fouque_systemic_delay}. 

Our approach uses the functional It\^o calculus framework, introduced by Bruno Dupire in the seminal paper \cite{fito_dupire}, which allows us to consider more general path\hyph dependent structures. Although our method could be also seen as an infinite\hyph dimensional analysis, it is rather different than the one applied in \cite{gozzi_delay_1} and \cite{gozzi_delay_2_I, gozzi_delay_2_II}. Our method delivers an HJB equation that can be applied to virtually any path\hyph dependent structure in the control and it could be formulated in the deterministic case as well. Our assumptions are mainly related to the well\hyph posedness of the optimal control problem (smoothness, measurability and integrability). Additionally, our method could be applied to delay of the type of Equation (\ref{eq:simple_example}) with no additional difficulty, which is not the case of the method derived in the aforesaid references. See Section \ref{sec:example_control} for more details.

The structure of the paper is as follows. We finish this introduction with the main definitions and results of functional It\^o calculus. In Section \ref{sec:control}, we introduce the problem we are considering and derive the main results of our work: the DPP in Theorem \ref{thm:dpp} and the Verification Theorem for the path\hyph dependent HJB equation in Theorem \ref{thm:hjb_verification}. An example is analyzed in Section \ref{sec:example_control}. Additionally, in Section \ref{sec:games}, we briefly study stochastic differential games with path\hyph dependence effects of the control in the dynamics and running cost.

\subsection{A Crash Course in Functional It\^o Calculus}\label{sec:fito_intro}

The important notions of the functional It\^o calculus framework will be introduced in this section. For more details and results, we forward the reader to \cite{fito_dupire, rama_cont_fito_change_variable}.

We start by fixing a time horizon $T > 0$. Denote $\Lambda_t^n$ the space of c\`adl\`ag paths in $[0,t]$ taking values in $\bR^n$ and define $\Lambda^n = \bigcup_{t \in [0,T]} \Lambda_t^n$ and $\Lambda^{n \times k} = \bigcup_{t \in [0,T]} \Lambda_t^n \times \Lambda_t^k$. Elements of $\Lambda^{n \times k}$ are two paths taking values in $\bR^n$ and $\bR^k$, respectively, with the same time interval as domain. When it is not necessary to distinguish the dimensions of these spaces, we will use the notation $\Lambda$.

Moreover, when considering examples with delay, one could consider $\bigcup_{t \in [-\tau,T]} \Lambda_t$, where $\tau$ is the largest possible value for the delay. In the examples studied here, we will assume that any path at negative time is zero. This does not increase the difficulty in our calculations and could be easily relaxed.

Capital letters will denote elements of $\Lambda$ (i.e. paths) and lower-case letters will denote spot value of paths. In symbols, $Y_t \in \Lambda$ means $Y_t \in \Lambda_t$ and $y_s = Y_t(s)$, for $s \leq t$.

A functional is any function $f: \Lambda \longrightarrow \bR$. For such objects, we define, when the limits exist, the time and space functional derivatives, respectively, as
\begin{align}
\Delta_t f(Y_t) &= \lim_{\delta t \to 0^+} \frac{f(Y_{t,\delta t}) - f(Y_t)}{\delta t}, \label{eq:time_deriv}\\
\Delta_x f(Y_t) &= \lim_{h \to 0} \frac{f(Y_t^h) - f(Y_t)}{h}, \label{eq:space_deriv}
\end{align}
where
\begin{align*}
Y_{t,\delta t}(u) &= \left\{
\begin{array}{ll}
  y_u,  &\mbox{ if } \quad 0 \leq u \leq t, \\
  y_t,  &\mbox{ if } \quad t \leq u \leq t + \delta t,
\end{array}
\right. \\
Y_t^h(u) &= \left\{
\begin{array}{ll}
  y_u,      &\mbox{ if } \quad 0 \leq u < t, \\
  y_t + h,  &\mbox{ if } \quad u = t,
\end{array}
\right.
\end{align*}
see Figures \ref{fig:flat} and \ref{fig:bump}. In the case when the path $Y_t$ lies in a multidimensional space, the path deformations above are understood as follows: the flat extension is applied to all dimension jointly and equally and the bump is applied to each dimension individually.

\begin{figure}[h!]
\centering
  \begin{minipage}[b]{0.45\linewidth}
    \centering
    \includegraphics[width=0.5\linewidth]{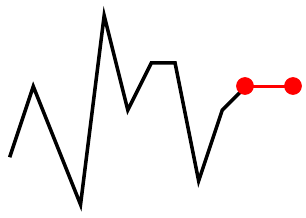}
    \caption{Flat extension of a path.}
    \label{fig:flat}
  \end{minipage}
  \begin{minipage}[b]{0.45\linewidth}
    \centering
    \includegraphics[width=0.5\linewidth]{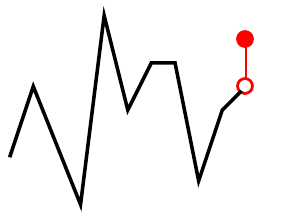}
    \caption{Bumped path.}
    \label{fig:bump}
  \end{minipage}
\end{figure}

We consider here continuity of functionals as the usual continuity in metric spaces with respect to the metric:
\begin{align*}
d_{\Lambda}(Y_t,Z_s) = \| Y_{t,s-t} - Z_s\|_{\infty} + |s -t|,
\end{align*}
where, without loss of generality, we are assuming $s \geq t$, and
\begin{align*}
\|Y_t\|_{\infty} = \sup_{u \in [0,t]} |y_u|.
\end{align*}
The norm $| \cdot |$ is the usual Euclidean norm in the appropriate Euclidian space, depending on the dimension of the path being considered. This continuity notion could be relaxed, see, for instance, \cite{fito_extension_ito_formula}.

Moreover, we say a functional $f$ is \textit{boundedness\hyph preserving} if, for every compact set $K \subset \bR^d$, there exists a constant $C$ such that $|f(Y_t)| \leq C$, for every path $Y_t$ satisfying $Y_t([0,t]) = \{y \in \bR^d \ ; \ Y_t(s) = y \mbox{ for some } s \in [0,t]\} \subset K$.

A functional $f: \Lambda \longrightarrow \bR$ is said to belong to $\bC^{1,2}$ if it is $\Lambda$\hyph continuous, bounded\-ness\hyph preserving and it has $\Lambda$\hyph continuous, boundedness\hyph preserving derivatives $\Delta_t f$, $\Delta_x f$ and $\Delta_{xx} f$. Here, clearly, $\Delta_{xx} = \Delta_x \Delta_x$.

The It\^o formula can be generalized to this framework. The proof can be found in \cite{fito_dupire}. We start by fixing a probability space $(\Omega, \cF, \bP)$.
\begin{theorem}[Functional It\^o Formula; \cite{fito_dupire}]\label{thm:fif}
Let $x$ be a continuous semimartingale and $f \in \bC^{1,2}$. Then, for any $t \in [0,T]$,
\begin{align*}
f(X_t) = f(X_0) + \int_0^t \Delta_t f(X_s) ds + \int_0^t \Delta_x f(X_s) dx_s + \frac{1}{2} \int_0^t \Delta_{xx} f(X_s) d\langle x\rangle_s \quad \bP\mbox{-a.s.}
\end{align*}
\end{theorem}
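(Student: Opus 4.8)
The plan is to follow the classical route for It\^o-type formulas, transplanted to the path space $\Lambda$. First I would localize: for a compact $K \subset \bR^n$ set $\sigma_K = \inf\{s \ge 0 \ ; \ x_s \notin K\}$ and prove the identity on $[0, t\wedge\sigma_K]$. Along the stopped path every $X_s$ stays in a fixed compact tube, so the boundedness-preserving hypothesis makes $f$, $\Delta_t f$, $\Delta_x f$ and $\Delta_{xx} f$ bounded along $X$, and their $\Lambda$-continuity upgrades to uniform continuity there. Decomposing $x = x_0 + m + a$ with $m$ a continuous local martingale and $a$ continuous of bounded variation, a further stopping lets me assume $m$ is bounded and $a$ and $\langle x\rangle$ have bounded total variation; the general statement is recovered at the end by letting $K \uparrow \bR^n$ and invoking dominated convergence, since both sides of the claimed identity are a.s.\ continuous in $t$.

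Next I would fix a refining sequence of partitions $0 = t^n_0 < t^n_1 < \cdots < t^n_{k_n} = t$ with mesh tending to $0$, let $X^n$ be the associated piecewise-constant (c\`adl\`ag) approximation of $X$ that jumps to $x_{t^n_{i+1}}$ at time $t^n_{i+1}$, and note that a.s.\ uniform continuity of $s\mapsto x_s$ gives $\sup_{s\le t} d_\Lambda(X^n_s, X_s) \to 0$, hence $f(X^n_t) \to f(X_t)$ by $\Lambda$-continuity. Telescoping $f(X^n_t) - f(X_0)$ over the partition, each increment splits (with $\delta_i = t^n_{i+1}-t^n_i$) as
\[
f(X^n_{t^n_{i+1}}) - f(X^n_{t^n_i}) = \Bigl[f(X^n_{t^n_i,\,\delta_i}) - f(X^n_{t^n_i})\Bigr] + \Bigl[f\bigl((X^n_{t^n_i,\,\delta_i})^{\,x_{t^n_{i+1}} - x_{t^n_i}}\bigr) - f(X^n_{t^n_i,\,\delta_i})\Bigr].
\]
The first, horizontal, bracket equals $\int_{t^n_i}^{t^n_{i+1}} \Delta_t f(X^n_{t^n_i,\,u-t^n_i})\,du$, directly from the definition (\ref{eq:time_deriv}) and the fundamental theorem of calculus applied to $u\mapsto f(X^n_{t^n_i,u})$. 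Because the approximation jumps to the true value at $t^n_{i+1}$, the second, vertical, bracket is a genuine one-point endpoint bump $g^n_i(x_{t^n_{i+1}}) - g^n_i(x_{t^n_i})$, where $g^n_i(y) := f\bigl((X^n_{t^n_i,\,\delta_i})^{\,y - x_{t^n_i}}\bigr)$ is, by (\ref{eq:space_deriv}), a $C^2$ function on $\bR^n$ whose first and second derivatives are $\Delta_x f$ and $\Delta_{xx} f$ evaluated at the corresponding bumped paths. A second-order Taylor expansion of $g^n_i$ between $x_{t^n_i}$ and $x_{t^n_{i+1}}$ then produces a first-order term $\Delta_x f(X^n_{t^n_i,\,\delta_i})\cdot(x_{t^n_{i+1}} - x_{t^n_i})$ and a second-order term controlled by $\Delta_{xx} f$ at an intermediate bumped path.

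Summing over $i$ and letting $n\to\infty$, I expect the horizontal sum to converge to $\int_0^t \Delta_t f(X_s)\,ds$, the first-order vertical sum to be a left-endpoint Riemann sum converging in probability to $\int_0^t \Delta_x f(X_s)\,dx_s$, and the second-order vertical sum to converge to $\tfrac{1}{2}\int_0^t \Delta_{xx} f(X_s)\,d\langle x\rangle_s$ by the standard quadratic-variation argument (after replacing the intermediate evaluation point of $\Delta_{xx} f$ by $X_{t^n_i}$), and then I would remove the localization as above. The hard part will be the control of the remainder terms: one must show that the intermediate bumped paths arising in the Taylor expansions converge to $X$ in $d_\Lambda$ uniformly along the partition --- which uses a.s.\ uniform continuity of the path together with the specific structure of the vertical bump --- so that $\Lambda$-continuity of $\Delta_x f$ and $\Delta_{xx} f$ can be applied uniformly, and one must check that the cross contributions between $m$ and $a$ and all genuinely higher-order terms are negligible. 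This is precisely where the boundedness-preserving assumption earns its keep, rendering every relevant quantity bounded on the localized event so that all the limits, and finally the passage $K\uparrow\bR^n$, are justified by dominated convergence.
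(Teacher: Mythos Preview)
The paper does not prove this theorem at all: it simply states the Functional It\^o Formula and refers to \cite{fito_dupire} for the proof, so there is no ``paper's own proof'' to compare against. Your sketch is essentially the argument of Dupire (made rigorous by Cont and Fourni\'e in \cite{rama_cont_fito_change_variable}): localize, approximate the path by a piecewise-constant c\`adl\`ag process along a partition, split each telescoped increment into a horizontal (flat-extension) part yielding $\int \Delta_t f\,ds$ and a vertical (endpoint-bump) part handled by a second-order Taylor expansion in the bump variable, and pass to the limit using $\Lambda$-continuity and the boundedness-preserving property. So your approach coincides with the one in the cited reference; the only caveat is that the convergence of the first-order vertical sums to the stochastic integral $\int_0^t \Delta_x f(X_s)\,dx_s$ is not a dominated-convergence statement but requires the standard It\^o-isometry/u.c.p.\ argument for left-endpoint Riemann approximations of stochastic integrals, which you allude to but should make explicit.
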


\section{Main results}\label{sec:main}
\subsection{Stochastic Control with Path\hyph Dependent Controls}\label{sec:control}
We suggest the reader to always keep in mind this example:
\begin{align*}
dx_s^{t, y, \alpha} = (\alpha_s - \alpha_{s - \tau})dt + \sigma dw_s, \quad x_t^{t,y,\alpha} =y.
\end{align*}

Consider a $d$\hyph dimensional Brownian motion $(w_t)_{t \in [0,T]}$ on $(\Omega, \cF, \bP)$ and a filtration $(\cF_t)_{t \in [0,T]}$ in this space, satisfying the usual conditions, to which the Brownian motion $(w_t)_{t \in [0,T]}$ is adapted. One could assume that $(\cF_t)_{t \in [0,T]}$ is the augmented natural filtration of $w$. The set of admissible controls $\bA(\cF_t)$, or just $\bA$, is the space of $\cF_t$\hyph progressively measurable, \textit{c\`adl\`ag} processes in $L^2(\Omega \times [0,T])$ taking value in some subset $\cA \subset \bR^k$. Additional restrictions on $\bA$ will be assumed.

We will consider the following path\hyph dependent controlled diffusion dynamics for $x$:
\begin{align}\label{eq:sde_control}
\left\{\begin{array}{l}
dx_s^{Y_t, \al} = b(X_s^{Y_t, \al}, A_s) ds + \sigma(X_s^{Y_t, \al}, A_s)dw_s, \mbox{ if } s > t,\\ \\
X_t^{Y_t, \al} = Y_t,
\end{array}\right.
\end{align}
where $A_s = (\alpha_t)_{t \in [0,s]}$, i.e. the path of the control $\alpha \in \bA$ up to time $s$, $b : \Lambda^{n \times k} \longrightarrow \bR^n$ and $\sigma : \Lambda^{n \times k}  \longrightarrow \bR^{n \times d}$, with $\bR^{n \times d}$ denoting the space of $n\times d$ matrices. Notice that we are allowing for path\hyph dependence of $b$ and $\sigma$ on the state system, $x$, and on the control, $\alpha$. 

To guarantee existence and uniqueness of strong solutions, we assume there exists a constant $K > 0$ such that
\begin{align*}
\left\{\begin{array}{l}
|b(Y_s, Z_s) - b(Y'_s, Z_s)| + |\sigma(Y_s, Z_s) - \sigma(Y'_s, Z_s)| \leq K \|Y_s - Y'_s\|_{\infty}, \\ \\
|b(Y_s, Z_s)| + |\sigma(Y_s, Z_s)| \leq K\left(1 + |s| + \|Y_s\|_{\infty}\right),
\end{array}\right.
\end{align*}
for all $s \geq t$, $(Y_s, Z_s), (Y_s', Z_s) \in \Lambda^{n \times k}$. These assumptions could be weaken, but it is outside the scope of this work.

Moreover, we consider the following class of cost functionals $J : \Lambda^n \times \bA \longrightarrow \bR$:
\begin{align}\label{eq:cost_functional}
J(Y_t, \al) = \bE\left[g(X_T^{Y_t, \al}) + \int_t^T f(X_s^{Y_t, \al}, A_s) ds \right],
\end{align}
where $g: \Lambda_T^n \longrightarrow \bR$ and $f : \Lambda^{n \times k} \longrightarrow \bR$ satisfy certain measurability and integrability conditions. Notice that $J(Y_T, \al) = g(Y_T)$. We additionally assume that the admissible controls in $\bA$ satisfy straightforward integrability conditions depending on the functionals $b$, $\sigma$ and $f$ so that Equations (\ref{eq:sde_control}) and (\ref{eq:cost_functional}) are well\hyph defined.

We will use the following notation: 
\begin{align}
(Z_t \otimes \al)(s) = \left\{\begin{array}{l}
z_s, \mbox{ if } s < t,\\ \\
\alpha_s, \mbox{ if } s \geq t.
\end{array}\right. \label{eq:otimes}
\end{align}
The path $Z_t \otimes \al$ is equal $Z$ up to time $t$ (excluding it) and then follows the control $\alpha$. Notice that, for any $\alpha \in \bA$ and $Z_t \in \Lambda$, the control $Z_t \otimes \alpha$ is admissible.

We then define the value functional $V : \Lambda^{n \times k} \longrightarrow \bR$:
\begin{align*}
V(Y_t, Z_t) = \inf_{\al \in \bA} J(Y_t, Z_t \otimes \al).
\end{align*}

\begin{remark}[C\`adl\`ag Controls]
The framework we are considering requires the additional assumption that the control is c\`adl\`ag, as it was stated above in the definition of $\bA$. This is inherent of the functional It\^o calculus theory and it allows us to use this technique to analyze these complex stochastic control problems we are considering here. From an application point-of-view, this restriction is not very strong as one would usually restrict even further the space of admissible controls. Although outside the scope of this paper, one could analyze whether the value function considered here is the same as the one for the more general class of progressively-measurable controls.
\end{remark}

We now state and prove the Dynamic Programming Principle for the control problem being considering.

\begin{theorem}[Dynamic Programming Principle (DPP)]\label{thm:dpp}
For any $u \in [t,T]$,
\begin{align*}
V(Y_t, Z_t) = \inf_{\al \in \bA} &\bE\left[V(X_u^{Y_t, Z_t \otimes \al}, (Z_t \otimes \al)_u) + \int_t^u f(X_s^{Y_t, Z_t \otimes \al}, (Z_t \otimes \al)_s) ds \right].
\end{align*}
\end{theorem}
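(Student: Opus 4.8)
The plan is to follow the classical two-inequality argument for the Dynamic Programming Principle, carefully adapted to the path-dependent control setting where the "state" consists of the pair $(X_s, (Z_t \otimes A_T)_s)$ carrying both the trajectory of $x$ and the full past of the control. Write $\Theta_s = X_s^{Y_t, Z_t \otimes A_T}$ and $\Xi_s = (Z_t \otimes A_T)_s$ for brevity, and denote the right-hand side of the claimed identity by $W(Y_t, Z_t)$. The key structural fact I would establish first is a \emph{flow/concatenation property}: for $t \le u \le s$, the path $X_s^{Y_t, Z_t \otimes A_T}$ coincides with $X_s^{\Theta_u, \Xi_u \otimes A_T'}$ whenever $A_T'$ agrees with $A_T$ on $[u,T]$, because the dynamics \eqref{eq:sde_control} at times $> u$ only see the path $X_\cdot$ up to the current time and the control path $A_\cdot$ up to the current time, both of which are determined by the initial segment $\Xi_u$ together with the future controls. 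This makes the cost functional split: conditioning on $\cF_u$,
\begin{align*}
J(Y_t, Z_t \otimes A_T) = \bE\left[\int_t^u f(\Theta_s, \Xi_s)\, ds + \bE\big[\, g(\Theta_T) + \int_u^T f(\Theta_s, \Xi_s)\, ds \,\big|\, \cF_u\big]\right],
\end{align*}
and by the Markov-type property of the SDE the inner conditional expectation equals $J(\Theta_u, \Xi_u \otimes A_T)$ evaluated with the control shifted appropriately.

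For the inequality $V \ge W$: given any $\eps > 0$ and any admissible $A_T$, the inner conditional expectation $J(\Theta_u, \Xi_u \otimes A_T)$ is by definition $\ge V(\Theta_u, \Xi_u)$ pointwise, so taking expectations and then the infimum over $A_T$ yields $V(Y_t, Z_t) \ge W(Y_t, Z_t)$. For the reverse inequality $V \le W$: fix $A_T$ used on $[t,u]$; for each terminal condition $(\theta, \xi)$ reachable at time $u$ one picks an $\eps$-optimal control, and then one must \emph{paste} these controls onto $A_T|_{[t,u]}$ to form a single admissible control on $[t,T]$ whose cost is within $\eps$ of $\bE[V(\Theta_u, \Xi_u) + \int_t^u f\, ds]$. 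This gives $V(Y_t,Z_t) \le W(Y_t,Z_t) + \eps$, and $\eps \downarrow 0$ finishes it.

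The main obstacle — as is standard for DPP proofs but with an extra twist here — is the measurable selection step in the $V \le W$ direction: one needs the $\eps$-optimal controls to depend measurably on the conditioning data $(\Theta_u, \Xi_u)$ so that the concatenated process is again progressively measurable and lies in $\bA$, and here the conditioning data is infinite-dimensional (a path in $\Lambda^{n \times k}$), so the selection theorem must be invoked on a path space. I would either appeal to a measurable selection theorem on Polish spaces (the Skorokhod space of c\`adl\`ag paths is Polish) applied to the set-valued map $(\theta,\xi) \mapsto \{A_T : J(\theta, \xi \otimes A_T) \le V(\theta,\xi) + \eps\}$, or, to keep the argument self-contained, restrict attention to a countable dense family of controls and a partition of the path space into small measurable cells on which a single near-optimal control works uniformly (using $\Lambda$-continuity of $V$, which would need to be assumed or derived from continuity of $b,\sigma,f,g$). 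A secondary technical point is verifying that the pasted control still satisfies the $L^2$ and problem-specific integrability constraints defining $\bA$; this follows from the a priori moment estimates on $X^{Y_t,A_T}$ implied by the linear-growth assumption on $b$ and $\sigma$, together with the integrability conditions already imposed on admissible controls.
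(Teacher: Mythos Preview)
Your proposal is correct and follows essentially the same two-inequality argument as the paper: both establish the flow/concatenation property of the controlled paths, split the cost via conditioning at time $u$, obtain $V \ge W$ by bounding the inner conditional cost below by $V(\Theta_u,\Xi_u)$, and obtain $V \le W$ by pasting an $\eps$-optimal continuation onto the control used on $[t,u]$. The paper handles the measurable selection step by a one-line citation to Soner--Touzi, whereas you spell out the Polish-space structure and the alternative discretization route; this is more careful but not a different idea.
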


\begin{proof}
The proof follows the same steps as in the path\hyph independent case, since all the coefficients are still adapted. We follow the structure of the proof in \cite{pham_book}.

Firstly, notice that, for any $\al \in \bA$ and $t \leq u \leq s \leq T$, we have the following equivalence of paths
\begin{align*}
X_s^{Y_t, \al} = X_s^{X_u^{Y_t,\al}, \al}.
\end{align*}
Then,
\begin{align*}
J(Y_t, \al) = \bE&\left[g(X_T^{X_u^{Y_t, \al},\al}) + \int_t^u f(X_s^{Y_t, \al},  A_s) ds + \int_u^T f(X_s^{X_u^{Y_t, \al}, \al},  A_s) ds\right],
\end{align*}
and conditioning on the path $X_u^{Y_t, \al}$, we find
\begin{align}
J(Y_t, \al) = \bE\left[J(X_u^{Y_t, \al}, \al) + \int_t^u f(X_s^{Y_t, \al},  A_s) ds\right].\label{eq:dpp_proof}
\end{align}
From this and choosing the control $\al$ to be $Z_t \otimes \al$, it is clear that
\begin{align*}
J(Y_t, Z_t \otimes \al) \geq \bE&\left[V(X_u^{Y_t, Z_t \otimes \al}, (Z_t \otimes \al)_u) + \int_t^u f(X_s^{Y_t, Z_t \otimes \al},  (Z_t \otimes \al)_s) ds\right].
\end{align*}
Taking the infimum with respect to $\al \in \bA$, we find
\begin{align*}
V(Y_t,Z_t) \geq \inf_{\al \in \bA} \bE&\left[V(X_u^{Y_t, Z_t \otimes \al}, (Z_t \otimes \al)_u) + \int_t^u f(X_s^{Y_t, Z_t \otimes \al},  (Z_t \otimes \al)_s) ds\right].
\end{align*}

To prove the opposite inequality, fix $\al \in \bA$ and $u \in [t,T]$. Then, for any $\eps > 0$, there exists $\al^\eps \in \bA$ such that
\begin{align*}
V(X_u^{Y_t, Z_t \otimes \al}, (Z_t \otimes \al)_u) + \eps \geq J(X_u^{Y_t, Z_t \otimes \al}, (Z_t \otimes \al)_u \otimes \al^\eps).
\end{align*}
It can be shown by the Measurable Selection Theorem (see, for example, \cite{soner_touzi_2002}) that $\al^* = A_u \otimes \al^\eps$ belongs to $\bA$ (i.e. it is progressively measurable). Since $Z_t \otimes \al^* = (Z_t \otimes \al)_u \otimes \al^\eps$, by Equation (\ref{eq:dpp_proof}), and $X_u^{Y_t, Z_t \otimes \al^*} = X_u^{Y_t, Z_t \otimes \al}$, we find
\begin{align*}
&V(Y_t, Z_t) \leq J(Y_t, Z_t \otimes \al^*) \\
&=  \bE\left[\int_t^u f(X_s^{Y_t, Z_t \otimes \al}, (Z_t \otimes \al)_s) ds + J(X_u^{Y_t, Z_t \otimes \al}, Z_t \otimes \al^*)\right]\\
&\leq \bE\left[\int_t^u f(X_s^{Y_t, Z_t \otimes \al}, (Z_t \otimes \al)_s) ds + V(X_u^{Y_t, Z_t \otimes \al}, (Z_t \otimes \al)_u)\right] + \eps,
\end{align*}
which implies, by the fact $\al \in \bA$ and $\eps > 0$ are arbitrary, that
\begin{align*}
V(Y_t, Z_t) \leq \inf_{\al \in \bA} \bE&\left[V(X_u^{Y_t, Z_t \otimes\al}, (Z_t \otimes \al)_u) + \int_t^u f(X_s^{Y_t, Z_t \otimes \al}, (Z_t \otimes \al)_s) ds\right],
\end{align*}
from where the final result follows.
\end{proof}

\subsection{The Path\hyph Dependent Hamilton\hyph Jacobi\hyph Bellman Equation}

In this section we will state the HJB equation related to our control problem and also prove a verification theorem for such equation. In the framework of the functional It\^o calculus, this type of equation is called path\hyph dependent Partial Differential Equation, PPDE. See for example, \cite{fito_touzi_ppde1, fito_touzi_ppde2, fito_zhang_1}.

We start by defining the Hamiltonian $H : \Lambda^{n \times k} \times \bR^n \times \bS^n \times \cA \longrightarrow \bR$:
\begin{align*}
H(Y_t, Z_t, p, \gamma, a) = \frac{1}{2} \sigma \sigma^T(Y_t, Z_t^{a-z_t}) : \gamma + b(Y_t, Z_t^{a-z_t}) \cdot p + f(Y_t, Z_t^{a-z_t}),
\end{align*}
and the \textit{modified} Hamiltonian $\widehat{H}: \Lambda^{n \times k} \times \bR^{\Lambda^k} \times\bR^n \times \bS^n \longrightarrow \bR$:
\begin{align}
\widehat{H}(Y_t, Z_t, q, p, \gamma) = \inf_{a \in \cA} \left\{q(Z_t^{a-z_t}) + H(Y_t, Z_t, p, \gamma, a) \right\}\label{eq:modified_hamiltonian}
\end{align}
The symbol $\bR^{\Lambda^k}$ denotes the space of functionals $\Lambda^k \longrightarrow \bR$ and $\bS^n$ is the space of $n \times n$ symmetric matrices. Notice that $Z_t^{a-z_t}$ is changing the last value of the control $Z_t$ to $a$.

The notation $\cdot$ and $:$ mean
\begin{align*}
p \cdot q = \sum_{i=1}^d p_i q_i \mbox{ and } \gamma : \phi = \mbox{trace}(\gamma\phi),
\end{align*}
where $p, q \in \bR^n$ and $\gamma, \phi \in \bS^n$.

As we will conclude, the HJB equation in this case is given by the following PPDE:
\begin{align}
\left\{\begin{array}{l}
\ds \widehat{H}(Y_t, Z_t,  \Delta_t V(Y_t, \cdot), \Delta_x V(Y_t, Z_t), \Delta_{xx} V(Y_t, Z_t))= 0,\\ \\
V(Y_T, Z_T) = g(Y_T),
\end{array}\right. \label{eq:path_dependent_hjb}
\end{align}
for any $(Y_T, Z_T) \in \Lambda_T^{n,k}$.\\

Here, the time derivative $\Delta_t$ is with respect to both variable $Y$ and $Z$:
\begin{align*}
\Delta_t V(Y_t, Z_t) = \lim_{\delta t \to 0^+} \frac{V(Y_{t, \delta t}, Z_{t, \delta t}) - V(Y_t, Z_t)}{\delta t},
\end{align*}
and the space derivative $\Delta_x$ is with respect to $Y$:
\begin{align*}
\Delta_x V(Y_t, Z_t) = \lim_{h \to 0} \frac{V(Y_t^h, Z_t) - V(Y_t, Z_t)}{h}.
\end{align*}

In a less compact notation, we could write the path\hyph dependent HJB equation (\ref{eq:path_dependent_hjb}) as
\begin{align*}
\left\{\begin{array}{l}
\ds \inf_{a \in \cA} \Big\{\Delta_t V(Y_t, Z_t^{a-z_t}) + H(Y_t, Z_t, \Delta_x V(Y_t, Z_t), \Delta_{xx} V(Y_t, Z_t), a) \Big\} = 0, \\ \\
V(Y_T, Z_T) = g(Y_T).
\end{array}\right. 
\end{align*}

\begin{remark}\label{rmk:predictable}
This remark will be the cornerstone of the proof of the Verification Theorem presented below. Notice that $V(Y_t, Z_t^h) = V(Y_t, Z_t)$, by the definition of the operator $\otimes$ given in (\ref{eq:otimes}). Denoting the functional derivatives with respect to the control $Z$ by $\Delta_\al$, we conclude $\Delta_\al V(Y_t, Z_t) = 0$, $\Delta_{\al \al} V(Y_t, Z_t) = 0$ and $\Delta_{x \al} V(Y_t, Z_t) = 0$. Hence, the dynamics of the control $\al$ will not impact the computations in the proof of the following theorem. This is similar to what \cite{rama_cont_fito_change_variable} assumed in order to consider functionals depending on the quadratic variation. These authors called such property \textit{predictability}. 

Moreover, if a smooth functional is predictable in a variable, then \textit{any space functional derivative} will be predictable in that variable. However, the \textit{time functional derivative} might not be predictable, in general. For example, the running integral functional $f(Y_t) = \int_0^t y_u du$ is predictable, but $\Delta_t f(Y_t) = y_t$ is not.
\end{remark}

\begin{theorem}[Verification Theorem]\label{thm:hjb_verification}

Suppose $V \in \bC^{1,2}$ solves the HJB equation (\ref{eq:path_dependent_hjb}). Under mild integrability conditions,
\begin{align*}
V(Y_t, Z_t) \leq J(Y_t, Z_t \otimes \al),
\end{align*}
for any $\al \in \bA$. Moreover, if there exists $\widehat{\al} \in \bA$ such that, for any $u \in [t,T]$,
\begin{align}\label{eq:optimal_control}
&\widehat{H}(X_u^{Y_t, Z_t \otimes \widehat{\al}}, (Z_t \otimes \widehat{\al})_u, \Delta_t V(X_u^{Y_t, Z_t \otimes \widehat{\al}}, \cdot), \Delta_x V, \Delta_{xx} V) \\
&= \Delta_t V(X_u^{Y_t, Z_t \otimes \widehat{\al}}, (Z_t \otimes \widehat{\al})_u) + H(X_u^{Y_t, Z_t \otimes \widehat{\al}}, (Z_t \otimes \widehat{\al})_u, \Delta_x V, \Delta_{xx} V, \widehat{\alpha}_u),\nonumber
\end{align}
then $V(Y_t, Z_t) = J(Y_t, Z_t \otimes \widehat{\al})$. All the functional derivatives in (\ref{eq:optimal_control}) are computed at $(X_u^{Y_t, Z_t \otimes \widehat{\al}}, (Z_t \otimes \widehat{\al})_u)$.

\end{theorem}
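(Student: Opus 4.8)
The plan is the classical verification scheme transported to the functional setting: apply the functional It\^o formula to $V$ along the controlled trajectory and read off the sign of the resulting drift from the HJB equation. Fix $A_T \in \bA$, set $X_s = X_s^{Y_t, Z_t \otimes A_T}$, $\cA_s = (Z_t \otimes A_T)_s$ and $\al_s = \cA_s(s)$ for $s \in [t,T]$, and consider the process $s \mapsto V(X_s, \cA_s)$. The state $x$ is a continuous semimartingale since it solves (\ref{eq:sde_control}), and by Remark \ref{rmk:predictable} the functional derivatives $\Delta_\al V$, $\Delta_{\al\al} V$ and $\Delta_{x\al} V$ all vanish, so the (possibly non-semimartingale) dynamics of the control do not enter the expansion. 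Applying the functional It\^o formula to $V(X_s,\cA_s)$ — legitimate here precisely because only the semimartingale $x$ effectively drives $V$ — and using $V(Y_T,Z_T)=g(Y_T)$ together with $V(X_t,\cA_t)=V(Y_t,Z_t)$ (again by predictability, since $\cA_t = Z_t^{\al_t - z_t}$ and $X_t=Y_t$), we obtain
\begin{align*}
g(X_T) = V(Y_t,Z_t) + \int_t^T \Delta_t V(X_s,\cA_s)\,ds + \int_t^T \Delta_x V(X_s,\cA_s)\,dx_s + \frac{1}{2}\int_t^T \Delta_{xx}V(X_s,\cA_s)\,d\langle x\rangle_s.
\end{align*}

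Next, substitute $dx_s = b(X_s,\cA_s)\,ds + \sigma(X_s,\cA_s)\,dw_s$ and $d\langle x\rangle_s = \sigma\sigma^T(X_s,\cA_s)\,ds$. Since $V$ solves (\ref{eq:path_dependent_hjb}), the infimum over $\al\in\cA$ of $\Delta_t V(X_s,\cA_s^{\al-\al_s}) + H(X_s,\cA_s,\Delta_x V(X_s,\cA_s),\Delta_{xx}V(X_s,\cA_s),\al)$ vanishes; taking $\al=\al_s$ — so that $H$ is evaluated at the spot value of its own control argument and thereby reduces to $b$, $\sigma\sigma^T$ and $f$ at $(X_s,\cA_s)$ — yields
\begin{align*}
\Delta_t V(X_s,\cA_s) + b(X_s,\cA_s)\cdot\Delta_x V(X_s,\cA_s) + \frac{1}{2}\sigma\sigma^T(X_s,\cA_s):\Delta_{xx}V(X_s,\cA_s) \ge -f(X_s,\cA_s),
\end{align*}
which is exactly a lower bound for the $ds$-integrand in the It\^o expansion. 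Plugging it in, taking expectations, and using that the stochastic integral $\int_t^T \Delta_x V(X_s,\cA_s)\sigma(X_s,\cA_s)\,dw_s$ has zero mean under the mild integrability conditions, we get $\bE[g(X_T)] \ge V(Y_t,Z_t) - \bE\left[\int_t^T f(X_s,\cA_s)\,ds\right]$, i.e. $V(Y_t,Z_t)\le J(Y_t,Z_t\otimes A_T)$, proving the first assertion.

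For the optimality claim, repeat the computation along $\widehat X_s = X_s^{Y_t, Z_t\otimes\widehat A_T}$, $\widehat\cA_s = (Z_t\otimes\widehat A_T)_s$, $\widehat\al_s = \widehat\cA_s(s)$. Condition (\ref{eq:optimal_control}) states that the infimum defining $\widehat H$ is attained at $\widehat\al_s$, so combined with $\widehat H(\cdots)=0$ from (\ref{eq:path_dependent_hjb}) the displayed inequality becomes an equality for a.e. $s$; the same chain of steps then gives $V(Y_t,Z_t) = J(Y_t,Z_t\otimes\widehat A_T)$. Together with the first part this shows $\widehat A_T$ is an optimal control and $V$ is the value functional at $(Y_t,Z_t)$.

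The main obstacle is making the first step rigorous: Theorem \ref{thm:fif} is stated for continuous semimartingales, whereas here $V$ is differentiated along the joint path $(X,\cA)$ whose control component is only c\`adl\`ag and need not be a semimartingale. What rescues the argument is exactly the predictability of $V$ in the control variable (Remark \ref{rmk:predictable}): since $V(X_s,\cA_s)$ depends on $\cA$ only through its strictly past values and has vanishing control-direction derivatives, $\cA$ can be treated as a frozen argument updated solely through $\Delta_t$ — the same device Cont and Fourni\'e use for functionals of the quadratic variation in \cite{rama_cont_fito_change_variable} — so no $dw$ term or quadratic-variation term in the control direction arises. Turning this into a precise statement requires either a predictable-dependence version of the functional It\^o formula or an approximation argument, and it is there, together with the zero-mean property of the It\^o integral and the integrability of the drift and running cost, that the ``mild integrability conditions'' enter; absent such assumptions, a standard localization by stopping times combined with Fatou's lemma still recovers the inequality $V\le J$.
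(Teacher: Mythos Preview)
Your proof is correct and follows essentially the same route as the paper: apply the functional It\^o formula to $V(X_s,(Z_t\otimes A_T)_s)$, invoke predictability in the control variable (Remark \ref{rmk:predictable}) to ignore the dynamics of $\alpha$, rewrite the drift via the Hamiltonian, bound it from below using the HJB equation at $\alpha=\alpha_s$, take expectations, and turn the inequality into an equality for $\widehat A_T$. Your additional discussion of why the It\^o formula is justified despite the control path being merely c\`adl\`ag is more explicit than the paper's, which simply asserts this via the predictability remark, but the underlying argument is the same.
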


\begin{proof}
Let us apply the Functional It\^o Formula, Theorem \ref{thm:fif}, to $V(X_s^{Y_t, Z_t \otimes \al},$ $(Z_t \otimes \al)_s)$, for fixed $\al \in \bA$. Notice that the path $Z$ is frozen and that we are considering the control $Z_t \otimes \al$, which means we follow the path $Z_t$ as the control up to time $t$ (excluding it) and then $\al$ from $t$ to $T$. Moreover, since the functional derivatives of $V$ with respect to the control $\alpha$ are zero, it is not required to consider the dynamics of the control $\alpha$, see Remark \ref{rmk:predictable}. Furthermore, the time derivative is with respect to both variables. In the computation that follows we suppress the superscript of $X_s^{Y_t, Z_t \otimes \al}$ for a cleaner exposition:
\begin{align*}
&g(X_T) = V(X_T, Z_t \otimes \al) =  V(Y_t, Z_t) + \int_t^T \Delta_t V(X_u, (Z_t \otimes \al)_u) du \\
&+ \int_t^T \Delta_x V(X_u, (Z_t \otimes \al)_u) \cdot b(X_u, (Z_t \otimes \al)_u)du \\
&+ \int_t^T \Delta_x V(X_u, (Z_t \otimes \al)_u) \cdot \sigma(X_u, (Z_t \otimes \al)_u)dw_u\\
&+ \frac{1}{2} \int_t^T \Delta_{xx} V(X_u, (Z_t \otimes \al)_u) : \sigma\sigma^T(X_u, (Z_t \otimes \al)_u) du\\
&=V(Y_t, Z_t) + \int_t^T \left(\Delta_t V(X_u, (Z_t \otimes \al)_u) + H(X_u, (Z_t \otimes \al)_u, \Delta_x V, \Delta_{xx} V, \alpha_u) \right)du \\
&+ \int_t^T \Delta_x V(X_u, (Z_t \otimes \al)_u) \cdot \sigma(X_u, (Z_t \otimes \al)_u)dw_u - \int_t^T f(X_u, (Z_t \otimes \al)_u) du\\
&\geq V(Y_t, Z_t) + \int_t^T \widehat{H}(X_u, (Z_t \otimes \al)_u, \Delta_t V(X_u, \cdot), \Delta_x V, \Delta_{xx} V) du \\
&+ \int_t^T \Delta_x V(X_u, (Z_t \otimes \al)_u) \cdot \sigma(X_u, (Z_t \otimes \al)_u)dw_u- \int_t^T f(X_u, (Z_t \otimes \al)_u) du\\
&= V(Y_t, Z_t) + \int_t^T \Delta_x V(X_u, (Z_t \otimes \al)_u) \cdot \sigma(X_u, (Z_t \otimes \al)_u)dw_u \\
&- \int_t^T f(X_u, (Z_t \otimes \al)_u) du.
\end{align*}
Under integrability conditions and applying localization techniques, we might assume, without loss of generality, that the It\^o integral above is a martingale. Therefore, taking expectation on both sides, we conclude:
\begin{align*}
V(Y_t, Z_t) \leq \bE\left[g(X_T^{Y_t, Z_t \otimes \al}) + \int_t^T f(X_u^{Y_t, Z_t \otimes \al}, (Z_t \otimes \al)_u) du\right] = J(Y_t, Z_t \otimes \al).
\end{align*}

Taking the control $\widehat{\al}$ satisfying Equation (\ref{eq:optimal_control}), we find
\begin{align*}
V(Y_t, Z_t) = \bE&\left[g(X_T^{Y_t, Z_t \otimes \widehat{\al}}) + \int_t^T f(X_u^{Y_t, Z_t \otimes \widehat{\al}}, (Z_t \otimes \widehat{\al})_u) du\right] = J(Y_t, Z_t \otimes \widehat{\al}).
\end{align*}
as desired.
\end{proof}

\begin{remark}
We will see an interesting application of the Verification Theorem above in Section \ref{sec:example_control}, where we study the case of control with delay.
\end{remark}

\begin{remark}\label{rmk:x_control}

We would like to stress the difference between the case where the stochastic control problem exhibits a path\hyph dependent effect of control and state variables, which we are dealing with in this paper, and the case where there is only path\hyph dependent effect of state variables. In this case, it is not necessary to consider as variable of $V$ the path of the control, $Z_t$. It is enough to define
\begin{align*}
J(Y_t, \al) &= \bE\left[ g(X_T^{Y_t, \al}) + \int_t^T f(X_s^{Y_t, \al}, \alpha_s) ds\right],\\
V(Y_t) &= \inf_{\al \in \bA[t,T]} J(Y_t, \al),
\end{align*}
where $\bA[t,T]$ is the space of admissible controls on $[t,T]$. The HJB equation in this becomes
\begin{align*}
\left\{\begin{array}{l}
\ds \Delta_t V(Y_t) + \inf_{a \in \cA} \Big\{\frac{1}{2} \sigma \sigma^T(Y_t, a) : \Delta_{xx} V(Y_t) + b(Y_t, a) \cdot \Delta_x V(Y_t) + f(Y_t, a) \Big\} = 0, \\ \\
V(Y_T) = g(Y_T).
\end{array}\right. 
\end{align*}
See, for example, \cite{fournie_cont_thesis}, \cite{fito_hjb} or \cite{fito_hjb_ji}.

\end{remark}

\begin{remark}

It is obvious that if the dynamics of $x$ and the functionals $g$ and $f$ are path\hyph independent in the state variable and control, we find the classical HJB equation. Moreover, if the path\hyph dependence is only in the control, meaning that, for $h = b, \sigma, f$, 
\begin{align*}
h(Y_t, Z_t) = h(t, y_t, Z_t) \mbox{ and } g(Y_T) = g(y_T),
\end{align*}
the path\hyph dependent HJB Equation (\ref{eq:path_dependent_hjb}) becomes
\begin{align}
\left\{\begin{array}{l}
\widehat{H}(t, y, Z_t, \Delta_t V(t,y,\cdot), \partial_x V(t, y, Z_t), \partial_{xx} V(t, y, Z_t)) = 0,\\ \\
V(T, y, Z_T) = g(y),
\end{array}\right. \label{eq:only_control_path_dependent_hjb}
\end{align}
where $\partial_x$ is the usual derivative with respect to the state variable and
\begin{align*}
\widehat{H}(t, y, Z_t, q, p, \gamma) = \inf_{a \in \cA} \Big\{q(Z_t^{a - z_t}) &+ \frac{1}{2} \sigma \sigma^T(t, y, Z_t^{a-z_t}) : \gamma + b(t, y, Z_t^{a-z_t}) \cdot p \\
& + f(t, y, Z_t^{a-z_t}) \Big\}.
\end{align*}
It is worth noticing that $\Delta_t$ is still a functional derivative. More precisely,
\begin{align*}
\Delta_t V(t, y, Z_t) = \lim_{\delta t \to 0^+} \frac{V(t+\delta t, y, Z_{t, \delta t}) - V(t, y, Z_t)}{\delta t}.
\end{align*}

\end{remark}

\subsubsection{Delayed Control}\label{sec:example_control}

We will exemplify the results derived in the section above, mainly the path\hyph dependent HJB equation, by considering the delay type of path\hyph dependence in the control as in \cite{gozzi_delay_1}, see also \cite{gozzi_delay_2_I, gozzi_delay_2_II, delay_control_huang, delay_control_li, delay_control_alekal}. Namely, we will assume that the drift and the volatility are given by
\begin{align*}
b(t, y, Z_t) = c_0 y + b_0 z_t + \int_{-\tau}^0 b_1(u)z_{t + u}du, \quad \sigma(t,y, Z_t) = \sigma,
\end{align*}
where $b_1 \in L^2([-\tau,0];\bR)$ or, the more complicated case, dealt in \cite{gozzi_delay_2_I, gozzi_delay_2_II}, where $b_1$ is a measure. A very important example being the Dirac mass at $-\tau$, denoted by $\delta_{-\tau}$. In the measure case, we assume, without loss of generality, there is no Dirac mass at 0. As we will see below, differently than the aforesaid references, the framework proposed here can deal with both these situations without additional difficulty.

The Hamiltonian becomes
$$H(t, y, Z_t, p, \gamma, a) = \frac{\sigma^2}{2} \gamma + (c_0 y + b_0 a + \int_{-\tau}^0 b_1(u)z_{t + u}du) \ p + f(t, y, Z_t^{a - z_t}).$$

In order to get a complete characterization of the value functional (up to computing the solution of a system of PDEs), we consider the following linear\hyph quadratic example:
\begin{align*}
&c_0 = 0, \ b_0 = 1, \ b_1 = \delta_{-\tau}, \ f(t,y, Z_t) = \frac{z_t^2}{2} + \beta z_t y + \frac{\eps}{2} y^2 \mbox{ and } g(y) = c \frac{y^2}{2}.
\end{align*}
Hence
\begin{align*}
H(t,y,Z_t,p,\gamma, a) = \frac{\sigma^2}{2} \gamma + (a - z_{t-\tau})p + \frac{a^2}{2} + \beta a y + \frac{\eps}{2}y^2.
\end{align*}

\begin{remark} Notice that even though $b$ is not a smooth functional (since the delayed functional is not time functional differentiable), one would expect (as it is the case) that the value function is indeed smooth in the functional sense. The idea behind this fact is that the spot delayed dependence of $b$ generates a continuum delayed dependence on the value function, see Equation (\ref{eq:value_example}) below.

\end{remark}

We consider the following ansatz for the value functional, as it was examined, for instance, in \cite{delay_control_huang}:
\begin{align}\label{eq:value_example}
V(t, y, Z_t) &= \frac{1}{2} F_0(t) y^2 + y \int_{t-\tau}^t F_1(t,\theta-t)z_{\theta} d\theta \\ \nonumber
&+ \int_{t-\tau}^t \int_{t-\tau}^t F_2(t,\theta_1-t, \theta_2-t)z_{\theta_1} z_{\theta_2} d\theta_1 d\theta_2 + F_3(t),
\end{align}
where we assume that $F_2$ is symmetric in the last two variables as it is usually done in these problems: 
\begin{align*}
F_2(t,\theta_1, \theta_2) = F_2(t, \theta_2, \theta_1).
\end{align*}
We can compute the derivatives of $V$ explicitly and see that $V \in \bC^{1,2}$. The time functional derivative $\Delta_t$ would be more complicated, but for this ansatz, it may be verified that it is equivalent to taking derivative with respect to $t$:
\begin{align}
&\partial_x V = F_0(t) y + \int_{t-\tau}^t F_1(t, \theta-t) z_{\theta} d\theta,\\
&\partial_{xx} V = F_0(t),\\
&\Delta_t V = F_0'(t) \frac{y^2}{2} + y \Big( F_1(t,0)z_t - F_1(t,-\tau)z_{t-\tau}) \label{eq:time_derivative_example}\\
&+ \int_{t-\tau}^t \left(\frac{\partial F_1}{\partial t} - \frac{\partial F_1}{\partial \theta}\right)(t,\theta-t)z_{\theta} d\theta \Big)  \nonumber\\
&+ 2z_t\int_{t-\tau}^t F_2(t,\theta-t,0) z_{\theta} d\theta - 2z_{t-\tau}\int_{t-\tau}^t F_2(t,\theta-t,-\tau) z_{\theta} d\theta \nonumber \\
&+ \int_{t-\tau}^t \int_{t-\tau}^t \left(\frac{\partial F_2}{\partial t} - \frac{\partial F_2}{\partial \theta_1} - \frac{\partial F_2}{\partial \theta_2}\right)(t,\theta_1-t, \theta_2-t)z_{\theta_1} z_{\theta_2} d\theta_1 d\theta_2 + F_3'(t).\nonumber
\end{align}
Combining all derivatives into the modified Hamiltonian (\ref{eq:modified_hamiltonian}), we find that the terms that depend on the current control $a$ are:
\begin{align}
y F_1(t,0)a + 2a \int_{t-\tau}^t F_2(t,\theta-t,0) z_{\theta} d\theta + a p + \frac{a^2}{2} + \beta y a. \label{eq:alpha_example}
\end{align}
The infimum is then attained at
\begin{align*}
\widehat{a}(t,y, Z_t, p) = -\beta y - p - yF_1(t,0) - 2\int_{t-\tau}^t F_2(t,\theta-t,0) z_{\theta} d\theta,
\end{align*}
and the minimum value of the expression (\ref{eq:alpha_example}) is given by $-\widehat{a}(t,y, Z_t, p)^2/2$.
The HJB equation in this example becomes:
\begin{align}\label{eq:hjb_example_delay}
\left\{\begin{array}{l}
\ds\Delta_t V(t,y, Z_t^{-z_t}) + \frac{\sigma^2}{2} \partial_{xx} V(t,y, Z_t) - z_{t - \tau} \partial_x V(t, y, Z_t)\\ \\ 
\ds- \frac{1}{2} \widehat{a}^2(t,y, Z_t, \partial_x V(t, y, Z_t))  + \frac{\eps}{2}y^2 = 0,\\ \\
V(T, y, Z_T) = c \ds\frac{y^2}{2}.
\end{array}\right.
\end{align}
Notice that $\Delta_t V(t,y, Z_t^{-z_t})$ removes the terms that depend on $z_t$ in Equation (\ref{eq:time_derivative_example}). Additionally, the optimal control is given by
\begin{align*}
\begin{split}
\widehat{a}(t,y, Z_t, \partial_x V(t, y, Z_t)) &= -(F_0(t) + F_1(t,0) + \beta)y \\
&- \int_{t-\tau}^t \left( F_1(t, \theta-t) + 2F_2(t, \theta-t,0) \right) z_\theta d\theta.
\end{split}
\end{align*}

Combining all derivatives into HJB Equation (\ref{eq:hjb_example_delay}), we find
\begin{align*}
&\ds \frac{1}{2} y^2 \left(F_0'(t) - (F_0(t) + F_1(t,0) + \beta)^2 + \eps\right) \\
&\ds + y \left(- (F_1(t,-\tau) - F_0(t))z_{t-\tau}  \right.\\
&\ds -\frac{1}{2}(F_0(t) + F_1(t,0) + \beta)\int_{t-\tau}^t \left( F_1(t, \theta-t) + 2F_2(t, \theta-t,0) \right) z_\theta d\theta \\
&\left. \ds + \int_{t-\tau}^t \left(\frac{\partial F_1}{\partial t} - \frac{\partial F_1}{\partial \theta}\right)(t,\theta-t)z_\theta d\theta \right)  \\
&\ds + F_3'(t) + \frac{\sigma^2}{2} F_0(t) - z_{t-\tau}\int_{t-\tau}^t (2F_2(t, \theta-t, -\tau) - F_1(t,\theta - t))z_{\theta} d\theta\\
&\ds +\int_{t-\tau}^t \int_{t-\tau}^t \left(\frac{\partial F_2}{\partial t} - \frac{\partial F_2}{\partial \theta_1} - \frac{\partial F_2}{\partial \theta_2}\right)(t,\theta_1-t, \theta_2-t)z_{\theta_1} z_{\theta_2} d\theta_1 d\theta_2 \\
&\ds -\frac{1}{2} \int_{t-\tau}^t \int_{t-\tau}^t \left( F_1(t, \theta_1-t) + 2F_2(t, \theta_1-t,0) \right) \\ 
&\ds \left( F_1(t, \theta_2-t) + 2F_2(t, \theta_2-t,0) \right) z_{\theta_1} z_{\theta_2} d\theta_1 d\theta_2 = 0,
\end{align*}
with the following final conditions:
\begin{align*}
\left\{\begin{array}{l}
\ds F_0(T) = c,\\ \\
\ds F_1(T,\theta-T) = 0, \ \forall \ \theta \in (T-\tau, T),\\ \\
\ds F_2(T,\theta_1-T, \theta_2-T) = 0, \ \forall \ \theta_1, \theta_2 \in (T-\tau, T),\\ \\
\ds F_3(T) = 0.
\end{array}\right.
\end{align*}

Therefore, we find that, for any $t \in [0,T]$ and $\theta, \theta_1, \theta_2 \in (-\tau, 0)$,
\begin{align}\label{eq:system_PDE_control_begin}
&\left\{\begin{array}{l}
\ds F_0'(t) - (F_0(t) + F_1(t,0) + \beta)^2 + \eps = 0,\\ \\
\ds F_0(T) = c,
\end{array}\right. \\ \nonumber \\
&\left\{\begin{array}{l}
\ds \left(\frac{\partial F_1}{\partial t} - \frac{\partial F_1}{\partial \theta}\right)(t,\theta) \\ \\
\ds -\frac{1}{2}(F_0(t) + F_1(t,0) + \beta)\left( F_1(t, \theta) + 2F_2(t, \theta,0) \right) = 0, \\ \\
\ds F_1(t,-\tau) = -F_0(t),\\
\ds F_1(T,\theta) = 0,
\end{array}\right. \\ \nonumber \\
&\left\{\begin{array}{l}
\ds \left(\frac{\partial F_2}{\partial t} - \frac{\partial F_2}{\partial \theta_1} - \frac{\partial F_2}{\partial \theta_2}\right)(t,\theta_1, \theta_2)  \\ \\
\ds -\frac{1}{2}( F_1(t, \theta_1) + 2F_2(t, \theta_1,0) )( F_1(t, \theta_2) + 2F_2(t, \theta_2,0) ) = 0,\\ \\
\ds F_2(T,\theta_1, \theta_2) = 0,\\ \\
\ds F_2(t, \theta,-\tau) = F_2(t,-\tau, \theta) = -\frac{1}{2} F_1(t,\theta),
\end{array}\right. \\ \nonumber\\
\label{eq:system_PDE_control_end}
&\left\{\begin{array}{l}
\ds F_3'(t) + \frac{\sigma^2}{2} F_0(t) = 0,\\ \\
\ds F_3(T) = 0.
\end{array}\right.
\end{align}

In Figure \ref{fig:control_example}, we show the numerical solution of the PDE system above for the following parameters: $\beta = 1$, $\eps = 2$, $c=0$, $T=1$, $\tau = 0.05$ and $\sigma = 1$.

\begin{figure}[h!]
    \centering
    \includegraphics[width=0.7\linewidth]{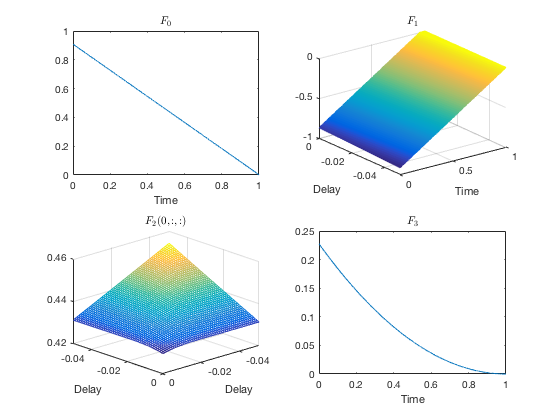}
    \caption{Numerical solution of the system PDEs (\ref{eq:system_PDE_control_begin})\hyph (\ref{eq:system_PDE_control_end})}
    \label{fig:control_example}
\end{figure}

\subsection{Stochastic Differential Games}\label{sec:games}

In this section, we will briefly analyze Stochastic Differential Games. Firstly, we present the general theory relating the game value function and a version of the HJB equation when there is path\hyph dependence in the control. Then, we exemplify the theory using the delayed stochastic differential game proposed in \cite{fouque_systemic_delay}.

Consider $N$ agents indexed by $i=1,\ldots,N$. These agents will act on a system whose state is described below:
\begin{align*}
\left\{\begin{array}{l}
dx_s^{Y_t, \al} = b(X_s^{Y_t, \al}, A_s) ds + \sigma(X_s^{Y_t,\al}, A_s)dw_s, \mbox{ if } s > t,\\ \\
X_t^{Y_t, \al} = Y_t,
\end{array}\right.
\end{align*}
where $Y_t \in \Lambda_t^m$, $m \in \bN$, $w$ is a $d$\hyph dimensional standard Brownian motion, $\al = (\al^1, \ldots, \al^N)$ with $\al^i$ being the $k_i$\hyph dimensional control chosen by agent $i$ taking values in $\cA^i$. Moreover,
\begin{align*}
(b, \sigma) : \Lambda^{m \times k} \longrightarrow \bR^m \times \bR^{m \times d},
\end{align*}
with $k = k_1 \times \cdots \times k_N$. The set of admissible controls of agent $i$ is denoted by $\bA^i$ and $\bA = \bA^1 \times \cdots \times \bA^N$. 
The agent $i$ chooses its own control $\alpha^i$ to minimize its own cost functional:
\begin{align*}
J^i(Y_t, \al) = \bE\left[g^i(X_T^{Y_t, \al}) + \int_t^T f^i(X_s^{Y_t, \al}, A_s)ds \right],
\end{align*}
where $g^i : \Lambda_T^m \longrightarrow \bR$ and $f^i : \Lambda^{m \times k} \longrightarrow \bR$ are his/hers terminal and running costs. We will define now the concept of equilibrium we will consider. The following notation will be used: for any $\al \in \bA$: $\al^{-i} = (\al^1, \ldots, \al^{i-1},$ $\al^{i+1}, \ldots, \al^N)$ and $(\al^{-i}, \widetilde{\al}) = (\al^1, \ldots, \al^{i-1}, \ \widetilde{\al}, \ \al^{i+1}, \ldots, \al^N)$.

\begin{definition}[Nash Equilibrium] An admissible strategy $\al^*$ is called a \textbf{Nash equilibrium} if, for any $i \in \{1, \ldots, N\}$ and $\widetilde{\al} \in \bA^i$, we have
$$J^i(Y_t, \al^*) \leq J^i(Y_t, (\al^{*^{-i}}, \widetilde{\al})).$$

Moreover, the Nash equilibrium can be further classified as
\begin{itemize}

\item \textbf{Open Loop}: $\alpha_t^{*^i} = \phi^i(W_t, x_0)$;

\item \textbf{Closed Loop}: $\alpha_t^{*^i} = \phi^i(X_t)$, 

\end{itemize}
for some functional $\phi^i$. See \cite{carmona_control}.

\end{definition}

In what follows, since we will use the HJB approach, we will be seeking a \textit{closed\hyph loop Nash equilibrium}.

Assuming that the other $N-1$ agents have already optimized their actions, denoted by
$\al^{*^{-i}} = (\al^{*^1}, \ldots, \al^{*^{i-1}},$ $\al^{*^{i+1}}, \ldots,\al^{*^N})$, the value functional for agent $i$ will be then given by
\begin{align*}
V^i(Y_t, Z_t) = \inf_{\al^i \in \bA^i} J^i(Y_t, Z_t \otimes (\al^{*^{-i}}, \al^i)).
\end{align*}

The DPP in this case (see Theorem \ref{thm:dpp}) becomes
\begin{align*}
V^i(Y_t, Z_t) = \inf_{\al^i \in \bA^i} &\bE\left[V\left(X_u^{Y_t, Z_t \otimes (\al^{*^{-i}}, \al^i )}, (Z_t \otimes (\al^{*^{-i}}, \al^i ))_u\right) + \right. \\
& \left. + \int_t^u f\left(X_s^{Y_t, Z_t \otimes (\al^{*^{-i}}, \al^i )}, (Z_t \otimes (\al^{*^{-i}}, \al^i ))_s\right) ds\right]
\end{align*}

Furthermore, under the assumptions of Theorem \ref{thm:hjb_verification}, we have a verification theorem for the following HJB Equation
\begin{align*}
\left\{\begin{array}{l}
\widehat{H^i}(Y_t, Z_t, \Delta_t V^i(Y_t, \cdot), \Delta_x V^i(Y_t, Z_t), \Delta_{xx} V^i(Y_t, Z_t)) = 0,\\ \\
V^i(Y_T, Z_T) = g^i(Y_T),
\end{array}\right.
\end{align*}
where
\begin{align*}
\widehat{H^i}(Y_t, Z_t, q, p, \gamma) = \inf_{a \in \cA^i} &\left\{q(Z_t^{(\alpha_t^{*^{-i}}, a) - z_t}) + \frac{1}{2} \sigma \sigma^T(Y_t, Z_t^{(\alpha_t^{*^{-i}}, a) - z_t}) : \gamma  \right. \\
&+ b(Y_t, Z_t^{(\alpha_t^{*^{-i}}, a) - z_t}) \cdot p + f^i(Y_t, Z_t^{(\alpha_t^{*^{-i}}, a) - z_t})\Big\}.
\end{align*}
Notice that $Z_t^{(\alpha_t^{*^{-i}}, a) - z_t}$ changes the control at time $t$ to $(\alpha_t^{*^{-i}}, a)$.

\subsubsection{Delayed Games}

We will now study the model introduced in \cite{fouque_systemic_delay}, where the authors proposes a stochastic differential game with delay in the control to analyze the systemic risk within a bank system.

Fix $m = d = N$, $k_i = 1$ and
\begin{align*}
&b(t, y, Z_t) = (z^1_t - z^1_{t -\tau}, \ldots, z^N_t - z^N_{t -\tau}) ,\\
&\sigma(t, y, Z_t) = \sigma I_N,\\
&f^i(t, y, Z_t) = \frac{(z^i_t)^2}{2} -  \beta z^i_t(\bar{y} - y^i) + \frac{\eps}{2} (\bar{y} - y^i)^2,\\
&g^i(y) = \frac{c}{2}(\bar{y} - y^i)^2,
\end{align*}
where $\bar{y} = \dfrac{1}{N}\sum_{i=1}^N y_i$ and $I_N$ is the identity matrix in $\bR^N$. Let us consider the same ansatz for the value functional as in \cite{fouque_systemic_delay},
\begin{align*}
V^i(t, y, Z_t) &= \frac{1}{2} E_0(t) (\bar{y} - y^i)^2 + (\bar{y} - y^i)\int_{t-\tau}^t E_1(t,\theta-t)(\bar{z}_{\theta} - z_{\theta}^i) d\theta \\
&+ \int_{t-\tau}^t \int_{t-\tau}^t E_2(t,\theta_1-t, \theta_2-t)(\bar{z}_{\theta_1} - z_{\theta_1}^i) (\bar{z}_{\theta_2} - z_{\theta_2}^i) d\theta_1 d\theta_2 + E_3(t).
\end{align*}
Assuming that $\alpha^j$ has been chosen, for $j \neq i$, by the reasoning outlined in Section \ref{sec:example_control}, the optimal control for the player $i$ is given by 
\begin{align*}
\widehat{a}^i(t,y, Z_t, p) = \beta(\bar{y} - y^i) - p_i - (\bar{y} - y^i)F_1(t,0) - 2\int_{t-\tau}^t F_2(t,\theta-t,0)(\bar{z}_{\theta} - z_{\theta}^i) d\theta.
\end{align*}
This is the same optimal control found in the aforesaid reference.

Assuming each player is following this strategy and noticing that $p_j$ in the formula for $\widehat{a}^j$ should be replaced by $\partial_{x_j} V^j$ (and not $\partial_{x_j} V^i$), we find that the HJB equation turns into
\begin{align}\label{eq:hjb_game}
\left\{\begin{array}{l}
\ds\Delta_t V^i(t, y, Z_t^{-z_t}) + \sum_{j=1}^N \left( \frac{\sigma^2}{2} \partial_{x_jx_j} V^i + (\widehat{a}_j(t, y, Z_t, \partial_{x_j} V^i) - z_{t-\tau}^j) \partial_{x_j} V^i\right)  \\ \\
\ds + \frac{1}{2}\widehat{a}^i(t,y, Z_t, \partial_{x_i} V^i)^2 - \beta \ \widehat{a}^i(t,y, Z_t, \partial_{x_i} V^i)(\bar{y} - y^i_t) + \frac{\eps}{2} (\bar{y} - y^i)^2 = 0,\\ \\
\ds V(T, y, Z_T) = \frac{c}{2}(\bar{y} - y^i)^2.
\end{array}\right.
\end{align}

Following the same arguments as in Section \ref{sec:example_control}, it is straightforward to find the same system of PDEs as in \cite{fouque_systemic_delay}. 

\section{Conclusions}\label{sec:conclusions}

In this paper, we have studied stochastic control and differential games when there exists path\hyph dependence effect of the control of the agent in the dynamics of the state and in the running cost. We have analyzed the important example of delayed dependence. The framework used was the functional It\^o calculus, which has been proven to be an excellent tool to deal with complicated path\hyph dependence structures, see \cite{fito_greeks}. Although we have focused on delayed dependence, because of practical importance, there are no major impediments to examine more interesting structures. We hope this work will allow the consideration of different path\hyph dependent structures in other applications.

Compared to the theory of \cite{gozzi_delay_1} and \cite{gozzi_delay_2_I, gozzi_delay_2_II}, that deals with the delayed case, the method proposed here allows in principle very general path\hyph dependence in the controls. Moreover, it could be directly applied to (Dirac) measures, as it was done in Section \ref{sec:example_control}. 

Future research will be conducted to analyze viscosity solutions (existence and uniqueness) of the path\hyph dependent HJB derived here. Viscosity solution of similar PPDEs have been extensively studied in recent years, see for example \cite{fito_zhang_1, fito_touzi_ppde1, fito_touzi_ppde2}. Moreover, it would be interesting to apply the theory developed here to Stackelberg games, see for instance \cite{bensoussan_stackelberg}.

\section*{Acknowledgements}
I would like to thank J.P. Fouque for bringing such a interesting problem to my attention and for all the insightful conversations. I also thank J. Zhang and M. Mousavi for the innumerous helpful discussions and comments.

\bibliographystyle{plainnat}

\end{document}